\newtheorem{thm}{Theorem}
\newtheorem{prop}{Proposition}
\newtheorem*{rem}{Remark}
\newtheorem*{defi}{Definition}
\newtheorem{lem}{Lemma}
\begin{document}

    \title{Commuting exponentials in dimension at most 3}
    \author{Gerald BOURGEOIS}
    
    \date{11-7-2011}
    \address{G\'erald Bourgeois, GAATI, Universit\'e de la polyn\'esie fran\c caise, BP 6570, 98702 FAA'A, Tahiti, Polyn\'esie Fran\c caise.}
    \email{gerald.bourgeois@upf.pf}
        
  \subjclass[2010]{Primary 15A24}
    \keywords{Matrix exponential, Property L}

\begin{abstract}
Let $A,B$ be two square complex matrices of dimension at most 3. We show that the following conditions are equivalent\\
i) There exists a finite subset $U\subset\mathbb{N}_{\geq{2}}$ such that
for every $t\in\mathbb{N}\setminus{U}$, $\exp(tA+B)=\exp(tA)\exp(B)=\exp(B)\exp(tA)$.\\
  ii) The pair $(A,B)$ has property L of Motzkin and Taussky and $\exp(A+B)=\exp(A)\exp(B)=\exp(B)\exp(A)$.    
\end{abstract}

\maketitle

    \section{Introduction}
    \textbf{Notation}. 
    $i)$ We denote by $\mathbb{N}$ the set of positive integers and, if $n\in\mathbb{N}$, by $I_n,0_n$ the identity matrix and the zero-matrix of dimension $n$.\\
    $ii)$ If $X$ is a complex square matrix, then $s(X)$ refers to the spectrum of $X$.\\
       \begin{defi}
  The $n\times{n}$ complex matrices $A,B$ are said to be simultaneously triangularizable ($ST$) if there exists an $n\times{n}$ invertible matrix $P$ such that $P^{-1}AP$ and $P^{-1}BP$ are upper triangular matrices.   
    \end{defi}
  In \cite{1}, the author dealed with square matrices $A,B\in\mathcal{M}_n(\mathbb{C})$, $(n=2$ or $3$), satisfying
  \begin{equation} \label{exp} \text{for every } t\in\mathbb{N}, \exp(tA+B)=\exp(tA)\exp(B)=\exp(B)\exp(tA) \end{equation}
  The author concluded that these matrices are simultaneously triangularizable.\\
  The result is true for $n=2$. However it is false for $n=3$. Indeed J.L. Tu communicated to the author the following counter-example
  \begin{equation}  \label{tu} A_0=2i\pi\begin{pmatrix}1&0&0\\0&2&0\\0&0&0\end{pmatrix},B_0=2i\pi\begin{pmatrix}2&1&1\\1&3&-2\\1&1&0\end{pmatrix}.  \end{equation} 
 Clearly $A_0,B_0$ are not $ST$. However it is easy to see that, for every $t\in\mathbb{C}$, the eigenvalues of $tA_0+B_0$ are the entries of its diagonal. Moreover, for every $t\in\mathbb{N}$, the eigenvalues of $tA_0+B_0$ belong to $2i\pi\mathbb{Z}$ and are distinct. Therefore, for every $t\in\mathbb{N}$, 
 $$\exp(A_0)=\exp(B_0)=\exp(tA_0+B_0)=I_3.$$    
  \begin{defi} \cite [Property L]{4} A pair $(A,B)\in\mathcal{M}_n(\mathbb{C})^2$ has property L if there exist orderings of the eigenvalues $(\lambda_i)_{i\leq{n}},(\mu_i)_{i\leq{n}}$ of $A,B$ such that for all $(x,y)\in\mathbb{C}^2$, $s(xA+yB)=(x\lambda_i+y\mu_i)_{i\leq{n}}$. 
  \end{defi}
  \begin{rem}
  If $A,B$ are $ST$, then the pair $(A,B)$ has property L. The converse is false but for $n=2$ (see \cite{4}).
  \end{rem}
  Recently, in \cite[Proposition 5]{3}, C. de Seguins Pazzis proved this result
  \begin{prop} \label{seguins} Let $A,B\in\mathcal{M}_n(\mathbb{C})$ satisfying Condition (\ref{exp}). The pair $(A,B)$ has property L.
  \end{prop}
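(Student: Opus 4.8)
The plan is to prove the equivalent statement that the eigenvalues of the pencil $t\mapsto tA+B$ are affine functions of $t$. Concretely, suppose we establish the factorization
\begin{equation*}
\det(zI_n-tA-B)=\prod_{j=1}^{n}\bigl(z-\lambda_j t-\mu_j\bigr)
\end{equation*}
as an identity in $(t,z)$, for suitable constants $\lambda_j,\mu_j$. Putting $t=0$ shows that $(\mu_j)_j$ is an ordering of $s(B)$, while comparing the terms of highest degree in $t$ shows that $(\lambda_j)_j$ is an ordering of $s(A)$. Then for $y\neq 0$ we have $s(xA+yB)=y\,s((x/y)A+B)=(x\lambda_j+y\mu_j)_j$, and the case $y=0$ is trivial; hence $(A,B)$ has property L with these orderings. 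So the whole problem reduces to showing that every branch $\mu_j(t)$ of the algebraic function defined by $\det(zI_n-tA-B)=0$ is an affine function of $t$.

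Two elementary observations narrow the target. First, $|\mu_j(t)|\le |t|\,\|A\|+\|B\|$, so the branches grow at most linearly; a branch is therefore affine as soon as its Puiseux expansion at $t=\infty$ reduces to a term of degree $1$ plus a constant term. Second, writing $tA+B=t(A+B/t)$ and letting $t\to\infty$ shows that the degree-one coefficients of the branches are exactly the eigenvalues of $A$, counted with multiplicity. Fixing a branch, write its leading coefficient $a_j\in s(A)$ and its constant term $b_j$, and set $\psi_j(t):=\mu_j(t)-a_j t-b_j$; the goal becomes $\psi_j\equiv 0$. When $A$ has simple spectrum (more generally, when the branch is unramified at infinity) $\psi_j$ is analytic in $1/t$ near $0$ and $\psi_j(t)\to 0$.

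Condition (\ref{exp}) now enters through its arithmetic content. For every $m\in\mathbb N$ the matrices $\exp(mA)$ and $\exp(B)$ commute, hence are simultaneously triangularizable, so the eigenvalues of $\exp(mA+B)=\exp(mA)\exp(B)$ are the numbers $e^{m\alpha+\beta}$ with $\alpha\in s(A)$ and $\beta\in s(B)$ running over a common eigenbasis. Comparing with the eigenvalues $e^{\mu_j(m)}$ of $\exp(mA+B)$ yields, for each branch and each large $m$, an exact relation
\begin{equation*}
\mu_j(m)=m\alpha_m+\beta_m+2i\pi k_m,\qquad \alpha_m\in s(A),\ \beta_m\in s(B),\ k_m\in\mathbb Z,
\end{equation*}
and therefore $\psi_j(m)=m(\alpha_m-a_j)+(\beta_m-b_j)+2i\pi k_m$. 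Since $(\alpha_m,\beta_m)$ ranges over the finite set $s(A)\times s(B)$, a pigeonhole argument gives an infinite set $M$ on which $(\alpha_m,\beta_m)$ is a fixed pair $(\alpha,\beta)$. In the favorable case where $\alpha=a_j$ and the branch is unramified at infinity, this reads $\psi_j(m)\in(\beta-b_j)+2i\pi\mathbb Z$ for $m\in M$; the right-hand side is a fixed discrete set, and since $\psi_j(m)\to 0$ we conclude $\psi_j(m)=0$ for all large $m\in M$, whence $\psi_j\equiv 0$ and the branch is affine.

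The main obstacle is to dispose of the two ways in which this favorable case can fail, both of which are genuinely realized in Tu's example (\ref{tu}). The first is drift: it may happen that along $M$ the matched eigenvalue satisfies $\delta:=\alpha-a_j\neq 0$, so that the relation only locates $\mu_j(m)$ modulo the moving lattice $m\delta+2i\pi\mathbb Z$ --- precisely the collapse of the spectra modulo $2i\pi\mathbb Z$ that lets (\ref{tu}) satisfy property L without being $ST$. The second is ramification at infinity, when $A$ has a nontrivial Jordan structure and $\psi_j$ acquires a fractional power $t^{\rho}$ with $0<\rho<1$. To handle drift I would first note that $\psi_j(m)\to 0$ forces $m\delta+2i\pi k_m$ to stay bounded, hence $\mathrm{Re}(\delta)=0$ and $\delta=2i\pi\omega$ with $\omega\in\mathbb R\setminus\{0\}$, turning the obstruction into a rotation $e^{2i\pi\omega m}$; I would then replace the bare existence of a matcher by the rigid, multiplicity-preserving bijection coming from the fixed joint generalized eigenspace decomposition of the commuting pair $(\exp A,\exp B)$, which couples $\alpha$ and $\beta$ to one common block and must hold simultaneously for all $m$. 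Combining this rigidity with equidistribution of $(\omega m)_m$ modulo $1$ should force either $\omega\in\mathbb Q$, returning $\psi_j(M)$ to a discrete set $\tfrac{2i\pi}{q}\mathbb Z$ where it must vanish, or a contradiction with the relation holding for every large $m$; the same real-part and equidistribution analysis rules out a fractional $t^{\rho}$. Reconciling this arithmetic of the $2i\pi\mathbb Z$-ambiguity with the analytic rigidity of the Puiseux tail is, I expect, where the real work lies.
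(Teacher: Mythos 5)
The paper offers no proof of this proposition: it is quoted verbatim from \cite[Proposition 5]{3}, so there is no internal argument to compare yours with, and your attempt must be judged on its own. Its skeleton is sound: the reduction of property L to the statement that every branch of $\det(zI_n-tA-B)=0$ is affine in $t$ is correct, the linear-growth and Puiseux observations are standard and right, the extraction of the relation $\mu_j(m)=m\alpha_m+\beta_m+2i\pi k_m$ from Condition (\ref{exp}) via simultaneous triangularization of the commuting matrices $\exp(mA)$ and $\exp(B)$ is valid, and the ``favorable case'' (unramified branch, matched slope) is genuinely closed by discreteness plus the identity theorem. But you yourself flag the drift case and the ramified case as ``where the real work lies,'' and that is exactly where the content of the proposition sits; what you offer there is a plan, not a proof, and the plan as stated does not go through.

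The concrete obstruction is this: your pigeonhole step only produces \emph{some} infinite subset $M\subset\mathbb{N}$ on which the matching pair $(\alpha,\beta)$ is constant, and along an arbitrary infinite subset of $\mathbb{N}$ there is no equidistribution to exploit. For irrational $\omega$ and any target $c$ there are infinitely many $m$ with $\omega m$ within $o(1)$ of $c+\mathbb{Z}$, so the relation $\psi_j(m)=2i\pi(\omega m+k_m)+(\beta-b_j)\to 0$ on $M$ yields no contradiction and does not force $\omega\in\mathbb{Q}$. The ``rigid bijection'' coming from a common triangularization of $\exp(A)$ and $\exp(B)$ does fix the list of pairs $(\alpha_i,\beta_i)$ uniformly in $m$, but it does not fix which pair matches which branch $\mu_j(m)$ --- that assignment still varies with $m$, so the pigeonhole (and hence the passage to a subsequence) cannot be avoided this way. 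Note also that drift genuinely occurs and is compatible with the conclusion: in Tu's example (\ref{tu}) every eigenvalue of $\exp(mA_0)\exp(B_0)$ equals $1$, so the matcher can be any pair and $\delta\neq 0$ is unavoidable, yet the branches are affine; so the drift case must be resolved affirmatively, not ruled out. The ramified case, where $\psi_j$ carries a nonzero term $ct^{\rho}$ with $0<\rho<1$ and no longer tends to $0$, is only gestured at. As written, your argument establishes the proposition only under additional hypotheses on $s(A)$ (no two eigenvalues differing by a purely imaginary number, no ramification at infinity), which excludes precisely the configurations that make the statement nontrivial; closing the remaining cases requires an idea not present in the proposal.
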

  We are interested in the converse of Proposition \ref{seguins}. We can wonder whether the conditions $e^Ae^B=e^Be^A=e^{A+B}$ and $(A,B)$ has property L imply Condition (\ref{exp}). The answer is no as the following shows\\
 \textbf{Counter-example.}  The pair $(A_0,-2B_0)$ (cf Example (\ref{tu})) has property L and $\exp(A_0)=\exp(-2B_0)=I_3$. Moreover, for $t\in\mathbb{N}\setminus\{2,3,4\}$, one has $\exp(tA_0-2B_0)=I_3$, and for $t\in\{2,3,4\}$, one has not. Therefore Condition (\ref{exp}) does not hold for this pair.\\
 Thus, we weaken Condition (\ref{exp}) as follows\\ 
  There exists a finite subset $U\subset\mathbb{N}_{n\geq{2}}$ such that
  \begin{equation} \label{expbis} \forall{t}\in\mathbb{N}\setminus{U},\exp(tA+B)=\exp(tA)\exp(B)=\exp(B)\exp(tA). \end{equation}
 \indent In this paper, we show that, in dimension $2$ or $3$, the pair $(A,B)$ satisfies Condition (\ref{expbis}) if and only if $e^{A+B}=e^Ae^B=e^Be^A$ and $(A,B)$ has property L. 
  \section{Property L and Condition (\ref{expbis})} 
    
 The following is a partial converse of Proposition \ref{seguins}. 
  \begin{prop}
  Assume that $A=diag(\lambda_1,\cdots,\lambda_n)\in\mathcal{M}_n(\mathbb{C})$ has $n$ distinct eigenvalues in $2i\pi\mathbb{Z}$, that $B=[b_{ij}]\in\mathcal{M}_n(\mathbb{C})$ (where for every $i\leq{n}$, $b_{ii}\in{2}i\pi\mathbb{Z}$) is diagonalizable and that the pair $(A,B)$ has property L. Then the pair $(A,B)$ satisfies Condition (\ref{expbis}).
  \end{prop}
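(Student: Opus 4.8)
The plan is to reduce everything to the trivial identity $\exp(M)=I_n$ for a diagonalizable matrix $M$ whose spectrum lies in $2i\pi\mathbb{Z}$. First I would record that, since $A=\mathrm{diag}(\lambda_1,\dots,\lambda_n)$ with every $\lambda_i\in 2i\pi\mathbb{Z}$, one has $\exp(tA)=\mathrm{diag}(e^{t\lambda_1},\dots,e^{t\lambda_n})=I_n$ for every $t\in\mathbb{N}$. Consequently $\exp(tA)\exp(B)=\exp(B)\exp(tA)=\exp(B)$, so Condition (\ref{expbis}) reduces to the single assertion that $\exp(tA+B)=\exp(B)$ for all $t$ outside a finite set.

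Next I would exploit property L to control the spectrum of the pencil. By hypothesis there are orderings $(\lambda_i),(\mu_i)$ with $s(tA+B)=\{t\lambda_i+\mu_i : i\le n\}$ for all $t$. Because the $\lambda_i$ are pairwise distinct, two eigenvalue lines $t\mapsto t\lambda_i+\mu_i$ and $t\mapsto t\lambda_j+\mu_j$ ($i\ne j$) meet for at most one value of $t$; collecting the finitely many such crossings lying in $\mathbb{N}_{\ge 2}$ into a finite set $U$, I obtain that for every $t\in\mathbb{N}\setminus U$ the matrix $tA+B$ has $n$ distinct eigenvalues and is therefore diagonalizable.

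The crux is the following identification of the $\mu_i$: I claim $\mu_i=b_{ii}$ for every $i$. This is where property L together with the distinctness of the $\lambda_i$ does the real work. Writing $tA+B=t\bigl(A+\tfrac1t B\bigr)$ and applying first-order eigenvalue perturbation theory to the simple eigenvalue $\lambda_i$ of $A$ (whose left and right eigenvectors are both $e_i$, since $A$ is diagonal, so that the first-order correction is $e_i^\ast B e_i=b_{ii}$), the eigenvalue of $tA+B$ with slope $\lambda_i$ equals $t\lambda_i+b_{ii}+O(1/t)$ as $t\to\infty$. On the other hand property L forces this same eigenvalue to be \emph{exactly} $t\lambda_i+\mu_i$; comparing the two expressions and letting $t\to\infty$ yields $\mu_i=b_{ii}$.

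Granting this, the proof closes immediately. Since each $b_{ii}\in 2i\pi\mathbb{Z}$, every eigenvalue $\mu_i$ of $B$ lies in $2i\pi\mathbb{Z}$, so $s(B)\subset 2i\pi\mathbb{Z}$; as $B$ is diagonalizable this gives $\exp(B)=I_n$. Likewise, for $t\in\mathbb{N}\setminus U$ the matrix $tA+B$ is diagonalizable with spectrum $\{t\lambda_i+b_{ii}\}\subset 2i\pi\mathbb{Z}$, hence $\exp(tA+B)=I_n$. Therefore $\exp(tA+B)=I_n=\exp(B)=\exp(tA)\exp(B)=\exp(B)\exp(tA)$ for all $t\in\mathbb{N}\setminus U$, which is Condition (\ref{expbis}). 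I expect the only genuine difficulty to be the perturbation step establishing $\mu_i=b_{ii}$; the hypothesis that $B$ be diagonalizable enters precisely to pass from $s(B)\subset 2i\pi\mathbb{Z}$ to $\exp(B)=I_n$, a conclusion that would fail for a non-diagonalizable $B$.
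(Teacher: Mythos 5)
Your proof is correct and is structurally the same as the paper's: observe that $e^{tA}=I_n$, identify the spectrum of $tA+B$ as $(t\lambda_i+b_{ii})_{i\leq n}$, and conclude that for all but finitely many $t\in\mathbb{N}$ the matrix $tA+B$ is diagonalizable with spectrum in $2i\pi\mathbb{Z}$, whence $\exp(tA+B)=I_n=\exp(B)=\exp(tA)\exp(B)$. The one genuine difference is how the middle step is justified: the paper simply cites Theorem 1 of Motzkin and Taussky, which states exactly that when $A$ is diagonal with $n$ distinct diagonal entries and $(A,B)$ has property L, then $s(xA+yB)=(x\lambda_i+yb_{ii})_{i\leq n}$; you re-derive this special case by first-order perturbation of the simple eigenvalues of $A$ (so that the eigenvalue with slope $\lambda_i$ is $t\lambda_i+b_{ii}+O(1/t)$) and then let $t\to\infty$ against the exact linear form supplied by property L. That argument is sound --- the distinctness of the $\lambda_i$ lets you match each perturbation branch to a unique line for large $t$ --- and it makes the proof self-contained at the cost of re-proving a classical lemma. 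One caveat, which your write-up shares with the paper's own proof (``for almost all $t$''): both arguments only secure $n$ distinct eigenvalues for $t$ outside a finite exceptional set, yet Condition (\ref{expbis}) requires $U\subset\mathbb{N}_{\geq 2}$, so a crossing of two lines $t\mapsto t\lambda_i+b_{ii}$ at $t=1$ (where $A+B$ need not be diagonalizable) cannot be absorbed into $U$; neither proof addresses this value of $t$.
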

  \begin{proof}  
   Note that $e^A=I_n$. According to \cite[Theorem 1]{4}, for every $t\in\mathbb{C}$, $$s(tA+B)=(t\lambda_i+b_{ii})_{i\leq{n}}.$$
    Thus $e^{B}=I_n$. For almost all $t\in\mathbb{N}$, $tA+B$ has $n$ distinct eigenvalues in $2i\pi\mathbb{Z}$ and $\exp(tA+B)=I_n$.  
  \end{proof}
  \begin{defi}
  $i)$ Let $A\in\mathcal{M}_n(\mathbb{C})$. The spectrum of $A$ is said to be $2i\pi$ congruence-free ($2i\pi$ CF) if, for all $\lambda,\mu\in{s}(A)$, $\lambda-\mu\notin{2}i\pi\mathbb{Z}^*$.\\
   $ii)$ For every $z\in\mathbb{C}$, $\Im(z)$ denotes its imaginary part.\\
  $iii)$ Let $\log:GL_n(\mathbb{C})\rightarrow\mathcal{M}_n(\mathbb{C})$ be the (non continuous) primary matrix function (cf. \cite{9}) associated to the principal branch of the logarithm, defined by $\Im(log(z))\in(-\pi,\pi]$, for every $z\in\mathbb{C}^*$. Thus for every $X\in{G}L_n(\mathbb{C})$, $s(\log(X))\subset\{z\in\mathbb{C}|\Im(z)\in(-\pi,\pi]\}$.
  \end{defi}
  \begin{lem}    \label{MN}   
Let $A\in\mathcal{M}_n(\mathbb{C})$. There exists a unique pair $(F,\Delta)$ of square $n\times{n}$ matrices, that are polynomials in $A$, such that 
 $$A=F+\Delta,e^F=e^A,e^{\Delta}=I_n\;\text{ and for all }\lambda\in{s}(F),\Im(\lambda)\in(-\pi,\pi].$$  
  \end{lem}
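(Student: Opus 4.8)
The plan is to produce the pair explicitly by setting $F=\log(e^A)$, with $\log$ the principal-branch primary matrix function introduced above, and $\Delta=A-F$; all the required properties should then fall out of the spectral-mapping behaviour of $\log$ together with the standard fact that a primary matrix function of a matrix is a polynomial in that matrix. The only nonobvious point in the construction is uniqueness, and that is where I expect the real work to be.

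For existence, first I would note that $e^A\in GL_n(\mathbb{C})$ (since $\det e^A=e^{\operatorname{tr}A}\neq 0$), so $F=\log(e^A)$ is defined, and by the very definition of $\log$ its spectrum satisfies $\Im(\lambda)\in(-\pi,\pi]$ for all $\lambda\in s(F)$. Since $\exp\circ\log=\mathrm{id}$ on $GL_n(\mathbb{C})$ (the scalar identity $e^{\log z}=z$ on $\mathbb{C}^*$ persists for primary functions, as all derivatives of $z\mapsto e^{\log z}$ agree with those of $z\mapsto z$), one gets $e^F=e^{\log(e^A)}=e^A$. To see that $F$ is a polynomial in $A$, I would use that $e^A$ is a primary matrix function of $A$, hence $e^A=p(A)$ for some polynomial $p$, and that $F=\log(e^A)$ is a primary matrix function of $e^A$, hence $F=r(e^A)=r(p(A))$ for some polynomial $r$; thus $F$, and therefore $\Delta=A-F$, are polynomials in $A$. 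Finally, $F$ commutes with $A$ and hence with $\Delta$, so $e^A=e^{F+\Delta}=e^Fe^\Delta$; cancelling the invertible factor $e^F=e^A$ yields $e^\Delta=I_n$. This settles existence, and I note in passing that $\Delta$ is then automatically diagonalizable with spectrum in $2i\pi\mathbb{Z}$, since $e^\Delta=I_n$.

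For uniqueness, suppose $(F',\Delta')$ is a second pair satisfying the same conditions, with $F',\Delta'$ polynomials in $A$. Then $F,F',\Delta,\Delta'$ all commute, being polynomials in $A$. Set $D=F-F'=\Delta'-\Delta$. From $e^\Delta=e^{\Delta'}=I_n$ and commutativity I get $e^{D}=e^{\Delta'}e^{-\Delta}=I_n$, which forces $D$ to be diagonalizable with $s(D)\subset 2i\pi\mathbb{Z}$ (a nontrivial Jordan block of $D$ at an eigenvalue $2i\pi k$ would survive, with its size, under $\exp$, contradicting $e^D=I_n$). Now I would simultaneously triangularize the commuting pair $(F,F')$: in a common triangularizing basis the diagonal entries of $F$ and $F'$ are their eigenvalues $\nu_i$ and $\nu_i'$, both with imaginary part in $(-\pi,\pi]$, and the diagonal of the triangular matrix $D$ is $(\nu_i-\nu_i')_i$, which therefore lists the eigenvalues of $D$.

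The hard part is the final numerical squeeze, but it is short. Each $\nu_i-\nu_i'\in 2i\pi\mathbb{Z}$ (it is an eigenvalue of $D$), while $\Im(\nu_i-\nu_i')=\Im(\nu_i)-\Im(\nu_i')\in(-2\pi,2\pi)$ because both imaginary parts lie in $(-\pi,\pi]$; the only element of $2i\pi\mathbb{Z}$ whose imaginary part is strictly between $-2\pi$ and $2\pi$ is $0$, so $\nu_i=\nu_i'$ for every $i$. Hence $D$ is a diagonalizable matrix all of whose eigenvalues vanish, i.e. $D=0$, and therefore $F=F'$ and $\Delta=\Delta'$. The delicate checks I would want to record carefully are precisely the interaction of $\exp$ with Jordan blocks used to obtain diagonalizability of $D$, and the strictness of the bound $(-2\pi,2\pi)$, which hinges on the half-open convention $\Im\in(-\pi,\pi]$ (a closed interval would permit a difference equal to $2i\pi$ and destroy uniqueness).
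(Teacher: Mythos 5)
Your construction of the pair is exactly the paper's: $F=\log(e^A)$, $\Delta=A-F$, and your existence argument matches the paper's step for step (the paper gets ``$F$ is a polynomial in $A$'' by citing Horn--Piepmeyer's Theorem 2 on inverses of primary matrix functions, whereas you compose the two standard facts that $e^A$ is a polynomial in $A$ and $\log(e^A)$ is a polynomial in $e^A$; both are fine). Where you genuinely diverge is uniqueness, which the paper dispatches in one sentence, ``Necessarily $F=\log(e^A)$'' --- implicitly resting on the same Horn--Piepmeyer result applied to an arbitrary candidate $F$: the strip condition makes $s(F)$ automatically $2i\pi$ congruence-free, so $\exp$ is injective with nonvanishing derivative on a neighborhood of $s(F)$, and the theorem then identifies $F$ as the principal-branch primary logarithm of $e^F=e^A$. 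You instead give a self-contained elementary argument: both candidates are polynomials in $A$, hence commute; their difference $D$ satisfies $e^D=I_n$, so it is diagonalizable with spectrum in $2i\pi\mathbb{Z}$; and simultaneous triangularization plus the observation that a difference of two numbers with imaginary parts in $(-\pi,\pi]$ lies in the open band $\Im\in(-2\pi,2\pi)$ forces $s(D)=\{0\}$, whence $D=0$. Your route costs a little more writing but avoids a second appeal to the inverse-function machinery and makes visible exactly where the half-open convention $(-\pi,\pi]$ is needed, which the paper leaves implicit; the paper's route is shorter at the price of leaning on the cited theorem twice. Both arguments are correct, and your remark that $\Delta$ is automatically diagonalizable with spectrum in $2i\pi\mathbb{Z}$ reproduces the paper's follow-up Remark.
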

  \begin{proof}
  Necessarily $F=\log(e^A)$. Let $f:x\in{U}\rightarrow{e}^x\in\mathbb{C}$ where $U$ is a neighborhood of $s(F)$. Then $f$ is a holomorphic function that is one to one on $U$ and such that $f'$ is not zero on $U$. According to \cite[Theorem 2]{5}, $F$ is a polynomial in $e^F=e^A$. Therefore $F$ is a polynomial in $A$. Let $\Delta=A-F$. One has $AF=FA$ and $e^{\Delta}=e^Ae^{-F}=I_n$.  
  \end{proof}
  \begin{rem}
Note that $s(F)$ is $2i\pi$ CF, $\Delta$ is diagonalizable and $s(\Delta)\subset{2}i\pi\mathbb{Z}$	.  
  \end{rem}
 The following two results concern the equation 
 $$e^{A+B}=e^Ae^B=e^Be^A$$
  in dimension $3$.
  \begin{prop} \label{couq} 
  Let $(A,B)$ be a pair of $3\times{3}$ complex matrices such that $e^{A+B}=e^Ae^B=e^Be^A$ and $AB\not=BA$. If $\mathbb{C}^3$ is an indecomposable $<A,B>$ module, then there exist a complex number $\sigma$ and two $3\times{3}$ complex matrices $\Delta$ and $F$, that are polynomials in $A$, such that $A=\sigma{I}_3+\Delta+F$ with $e^{\Delta}=I_3,F^2=0_3$. In the same way, $B=\tau{I}_3+\Theta+G$ with $e^{\Theta}=I_3,G^2=0_3$. Moreover $FG=GF$.
    \end{prop}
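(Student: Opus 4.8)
The plan is to apply Lemma \ref{MN} separately to $A$ and to $B$ and then to extract the required normal form from the two resulting logarithmic decompositions. Write $A=F_1+\Delta_1$ and $B=G_1+\Theta_1$ as in Lemma \ref{MN}, so that $F_1,\Delta_1$ are polynomials in $A$, $G_1,\Theta_1$ are polynomials in $B$, $e^{\Delta_1}=e^{\Theta_1}=I_3$, $e^{A}=e^{F_1}$, $e^{B}=e^{G_1}$, and $s(F_1),s(G_1)$ are $2i\pi$ CF. The whole statement then reduces to showing that $F_1$ has a single eigenvalue $\sigma$ with $(F_1-\sigma I_3)^2=0_3$ and likewise $G_1$ has a single eigenvalue $\tau$ with $(G_1-\tau I_3)^2=0_3$: setting $F=F_1-\sigma I_3$, $\Delta=\Delta_1$, $G=G_1-\tau I_3$, $\Theta=\Theta_1$ yields exactly $A=\sigma I_3+\Delta+F$ and $B=\tau I_3+\Theta+G$ with the asserted properties.

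The commutation $FG=GF$ is the easy part and I would dispose of it first. Indeed $F_1=\log(e^A)$ is a polynomial in $e^A$ and $G_1=\log(e^B)$ is a polynomial in $e^B$; since the hypothesis $e^Ae^B=e^Be^A$ says that $e^A$ and $e^B$ commute, any polynomial in $e^A$ commutes with any polynomial in $e^B$, whence $F_1G_1=G_1F_1$. Subtracting the scalars $\sigma I_3$ and $\tau I_3$ preserves this, so $FG=GF$.

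The core of the argument, and the step I expect to be the main obstacle, is to prove that $F_1$ (equivalently $e^A$) has a single eigenvalue. I would argue by contradiction: if $e^A$ had at least two distinct eigenvalues, its generalized eigenspaces would give a nontrivial direct sum decomposition of $\mathbb{C}^3$, and the goal would be to promote it to a decomposition into $\langle A,B\rangle$-submodules, contradicting indecomposability. The delicate point is invariance: each generalized eigenspace of $e^A$ is automatically invariant under $A$ (hence under $F_1$ and $\Delta_1$, both polynomials in $A$) and under $e^B$ (hence under $G_1$), but it need not a priori be invariant under $B$, because the $2i\pi\mathbb{Z}$-part $\Theta_1$ of $B$ is only a polynomial in $B$ and need not commute with $e^A$. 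Controlling $\Theta_1$ is exactly where the full hypothesis $e^{A+B}=e^Ae^B$ must enter, beyond the mere commutation of $e^A$ and $e^B$; this is the heart of the proof.

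Once a single eigenvalue $\sigma$ of $F_1$ is secured, $N:=F_1-\sigma I_3$ is nilpotent, and I would rule out $N^2\neq 0_3$, i.e.\ the case that $e^A$ is a single $3\times 3$ Jordan block. In that regular case $e^A$ is non-derogatory, so its commutant is $\mathbb{C}[e^A]$; since $e^B$ commutes with $e^A$ this forces $e^B\in\mathbb{C}[e^A]$, and the requirement that $\Delta_1$ be diagonalizable with spectrum in $2i\pi\mathbb{Z}$ while commuting with the regular nilpotent $N$ (the commutant of $N$ being $\{aI_3+bN+cN^2\}$) forces $\Delta_1$ to be scalar. One is then pushed toward $A$ and $B$ both lying in the commutative algebra $\mathbb{C}[N]$, contradicting $AB\neq BA$ --- again after invoking $e^{A+B}=e^Ae^B$ to eliminate the residual sub-case. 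Hence $N^2=0_3$, that is $(F_1-\sigma I_3)^2=0_3$; the identical argument applied to $B$ gives $(G_1-\tau I_3)^2=0_3$, and together with $FG=GF$ this completes the proof.
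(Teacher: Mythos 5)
Your reduction is sound and matches the paper's: apply Lemma \ref{MN} to write $A=F_1+\Delta_1$, $B=G_1+\Theta_1$, observe that everything comes down to showing $F_1=\sigma I_3+N$ with $N^2=0_3$ (and likewise for $G_1$), and get $F_1G_1=G_1F_1$ from the fact that $F_1\in\mathbb{C}[e^A]$ and $G_1\in\mathbb{C}[e^B]$ with $e^Ae^B=e^Be^A$ (the paper instead cites Wermuth's theorem on commuting exponentials with $2i\pi$ congruence-free spectra; your route is equally valid). But the two substantive steps are exactly the ones you leave open, and in both places you explicitly defer the argument (``this is the heart of the proof'', ``to eliminate the residual sub-case''), so the proposal has a genuine gap rather than a complete proof.

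Concretely, two things are missing. First, you never use the key external input that drives the paper's proof: Schmoeger's theorem, which says that $AB\not=BA$ together with $e^Ae^B=e^Be^A$ forces $s(A)$ and $s(B)$ to fail to be $2i\pi$ congruence-free. This is what gives $F_1$ a repeated eigenvalue and a minimal polynomial of degree at most $2$ (your Jordan-block case is killed the same way: if $F_1$ were non-derogatory, $\Delta_1$ would be a polynomial in $F_1$, hence scalar on each generalized eigenspace, making $s(A)$ congruence-free --- no need to drag $B$ into $\mathbb{C}[N]$). Second, your strategy for the single-eigenvalue claim --- promote the generalized eigenspace decomposition of $e^A$ to an $\langle A,B\rangle$-module decomposition --- does not go through as stated: the remaining bad case is $F_1-\sigma I_3\sim\mathrm{diag}(0,0,\lambda)$, $\lambda\not=0$, and there the paper must split into three sub-cases according to the form of $G_1$. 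Only one of them contradicts indecomposability; another contradicts the degree of the minimal polynomial of $F+G=\log(e^{A+B})$; and the last is eliminated by Hille's theorem applied to $e^{F+G}=e^{F+G+\Delta+\Theta}$, which yields $AB=BA$ rather than a module decomposition. So the invariance problem you correctly identify (that $\Theta_1$ need not preserve the generalized eigenspaces of $e^A$) is not a technicality to be controlled but the reason the decomposability argument alone cannot close the proof; the analysis of $\log(e^{A+B})$ and Hille's theorem are the missing ingredients.
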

  \begin{rem} It can be derived from \cite[Case (I) p. 165-166]{2}. However we give an alternative proof.
    \end{rem}
     \begin{proof}
  According to \cite{8}, $s(A),s(B)$ are not $2i\pi$ CF. Moreover the equality $$e^{A+B}e^{-A}=e^{-A}e^{A+B}=e^B$$
   implies that $s(A+B)$ is not $2i\pi$ CF. By Lemma \ref{MN}, one has $A=F+\Delta,B=G+\Theta$ where $e^F=e^A,e^G=e^B$. Thus $e^Fe^G=e^Ge^F$. According to \cite{7}, $FG=GF$. It remains to show that there exists a complex number $\sigma$ such that $(F-\sigma{I}_3)^2=0_3$. \\
 Assume that the minimal polynomial of $F$ has degree $3$, that is, in the Jordan normal form of $F$, there is exactly one Jordan block associated to each eigenvalue of $F$. Obviously $s(A)$ is $2i\pi$ CF, and one obtains a contradiction. Since $s(A)$ is not $2i\pi$ CF, $F$ has an eigenvalue with multiplicity at least $2$ and its minimal polynomial is of degree at most two. We may assume that $s(F)=\{0,0,*\}$. Up to similarity, $F$ is one of the following three forms.
\begin{eqnarray*} F&=&\begin{pmatrix}0&0&0\\0&0&0\\0&0&\lambda\end{pmatrix} \text{ where } \lambda\not={0},\\
 F&=&0_3\\
 \text{ or } F&=&\begin{pmatrix}0&1&0\\0&0&0\\0&0&0\end{pmatrix}. \end{eqnarray*}
In the last two cases, we are done. It remains to show that if $F=\begin{pmatrix}0&0&0\\0&0&0\\0&0&\lambda\end{pmatrix}$, where $\lambda\not={0}$,
then we obtain a contradiction. Note that 
 $$e^{F+G}=e^Fe^G=e^Ae^B=e^{A+B}.$$
  Therefore, if $s(F+G)\subset{(}-\pi,\pi]$, then $F+G=\log(e^{A+B})$. Clearly $F+G$ has also an eigenvalue with multiplicity at least $2$ and its minimal polynomial is of degree at most two. The matrices $F,G$ commute and, in the same way than for $F$, we can prove that $G$ is similar to one of the previous three forms. We obtain three possible values\\
$Case\; 1:$ $G=\begin{pmatrix}0&0&0\\0&0&0\\0&0&z\end{pmatrix}$. Then $\mathbb{C}^3$ is decomposable.\\ 
$Case\; 2:$  $G=\begin{pmatrix}0&1&0\\0&0&0\\0&0&0\end{pmatrix}$. One has $F+G=\log(e^{A+B})$ but its minimal polynomial is of degree $3$, that is a contradiction.\\
$Case\; 3:$ $G=\begin{pmatrix}\nu&0&0\\0&0&0\\0&0&0\end{pmatrix}$ where $\nu\not=0$. We have $F+G=\log(e^{A+B})$ and necessarily $\nu=\lambda$. Moreover $s(F+G)$ is $2i\pi$ CF and $e^{F+G}=e^{F+G+\Delta+\Theta}$. According to \cite{6}, $F+G$ and $\Delta+\Theta$ commute. We conclude that $\Delta$ and $\Theta$ are diagonal matrices and that $AB=BA$. That is a contradiction.  
    \end{proof}      
  \begin{prop}     \label{decomp}
Let $(A,B)$ be a pair of $3\times{3}$ complex matrices such that 
$$e^{A+B}=e^Ae^B=e^Be^A,$$
 $AB\not=BA$ and such that $\mathbb{C}^3$ is an indecomposable $<A,B>$ module. Then the pair $(A,B)$ has the property\\
$(*)$ The Jordan-Chevalley decompositions of $A,B,A+B$  are in the form \begin{eqnarray} \label{JC1} A&=&(\sigma{I}_3+\Delta)+F,\\ \label{JC2} B&=&(\tau{I}_3+\Theta)+G,\\ \label{JC3} A+B&=&     ((\sigma+\tau)I_3+\Delta+\Theta)+(F+G)\end{eqnarray}
 with the following equalities 
 \begin{eqnarray*} F^2&=&G^2=FG=GF=0_3,\\
  e^{\Delta}&=&e^{\Theta}=e^{\Delta+\Theta}=I_3\\
  and\; [F,\Theta]&=&[\Delta,G]. \end{eqnarray*}
  Conversely, if the pair $(A,B)$ has property $(*)$, then $e^{A+B}=e^Ae^B=e^Be^A$.   
  \end{prop}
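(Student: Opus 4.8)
The plan is to prove the two implications separately, using Proposition \ref{couq} as the engine for the direct implication and reducing the converse to a direct computation with $\exp$. First I would invoke Proposition \ref{couq} to write $A=\sigma I_3+\Delta+F$ and $B=\tau I_3+\Theta+G$, where $\Delta,F$ are polynomials in $A$ and $\Theta,G$ are polynomials in $B$, with $e^\Delta=e^\Theta=I_3$, $F^2=G^2=0_3$ and $FG=GF$. By the Remark following Lemma \ref{MN}, $\Delta$ and $\Theta$ are diagonalizable with spectra in $2i\pi\mathbb{Z}$, so $\sigma I_3+\Delta$ and $\tau I_3+\Theta$ are semisimple; since $\Delta$ commutes with $F$ and $\Theta$ with $G$ (each pair being polynomials in a common matrix), uniqueness of the Jordan--Chevalley decomposition shows that (\ref{JC1}) and (\ref{JC2}) are exactly the Jordan--Chevalley decompositions of $A$ and $B$.

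The next step is to upgrade $FG=GF$ to $FG=GF=0_3$, where I would use the dimension crucially: in $\mathcal{M}_3(\mathbb{C})$ a nonzero matrix of square zero has rank exactly $1$, since $\mathrm{Im}\subseteq\ker$ forces $2\,\mathrm{rank}\le 3$. If $F$ or $G$ vanishes the claim is trivial, so I assume $F=uv^T$ and $G=wx^T$ with $v^Tu=x^Tw=0$. Then $FG=(v^Tw)\,ux^T$ and $GF=(x^Tu)\,wv^T$; were this common value nonzero, comparing column and row spaces would force $u\parallel w$ and $v\parallel x$, hence $G$ proportional to $F$ and therefore $FG$ proportional to $F^2=0_3$, a contradiction. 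Thus $FG=GF=0_3$, and in particular $(F+G)^2=0_3$.

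The heart of the argument is to identify the Jordan--Chevalley decomposition of $A+B$, and this is where I expect the main obstacle: a priori $\Delta$ and $\Theta$ need not commute and $\Delta+\Theta$ need not be a polynomial in $A+B$, so one cannot simply add the decompositions (\ref{JC1}) and (\ref{JC2}). The device that resolves this is to pass through $e^{A+B}$. Since $\Delta,F$ commute and $e^\Delta=I_3$, I get $e^A=e^\sigma(I_3+F)$ and likewise $e^B=e^\tau(I_3+G)$, whence $e^{A+B}=e^Ae^B=e^{\sigma+\tau}(I_3+F+G)$ using $FG=0_3$. The right-hand side is the scalar $e^{\sigma+\tau}I_3$ times the unipotent matrix $I_3+(F+G)$, so this is the multiplicative Jordan decomposition of $e^{A+B}$. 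Writing the additive Jordan--Chevalley decomposition $A+B=S+N$, the factorization $e^{A+B}=e^Se^N$ is another multiplicative Jordan decomposition; by uniqueness $e^N=I_3+F+G=e^{F+G}$ and $e^S=e^{\sigma+\tau}I_3$. Injectivity of $\exp$ on nilpotent matrices gives $N=F+G$, hence $S=A+B-N=(\sigma+\tau)I_3+\Delta+\Theta$; this proves (\ref{JC3}), shows $\Delta+\Theta$ is diagonalizable, and gives $e^{\Delta+\Theta}=I_3$ from $e^S=e^{\sigma+\tau}I_3$. Finally $[S,N]=0$ reads $[\Delta+\Theta,F+G]=0$, and cancelling $[\Delta,F]=[\Theta,G]=0$ leaves $[\Delta,G]+[\Theta,F]=0$, i.e. $[F,\Theta]=[\Delta,G]$.

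For the converse, property $(*)$ already asserts that the three listed decompositions are the Jordan--Chevalley ones, so $[\Delta,F]=[\Theta,G]=[\Delta+\Theta,F+G]=0$. Then $e^A=e^\sigma(I_3+F)$ and $e^B=e^\tau(I_3+G)$ exactly as above, and $FG=GF=0_3$ gives $e^Ae^B=e^Be^A=e^{\sigma+\tau}(I_3+F+G)$; on the other hand $e^{\Delta+\Theta}=I_3$ together with $(F+G)^2=0_3$ yields $e^{A+B}=e^{\sigma+\tau}e^{\Delta+\Theta}e^{F+G}=e^{\sigma+\tau}(I_3+F+G)$, so all three agree. This is a routine computation and poses no real difficulty once the direct implication is in place.
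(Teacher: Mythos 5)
Your proof is correct, and while its skeleton (invoke Proposition \ref{couq}, identify the three Jordan--Chevalley decompositions, compute the exponentials) matches the paper's, you handle the two substantive steps by genuinely different means. To get $FG=GF=0_3$, the paper applies the Lemma \ref{MN}--type decomposition to $A+B$ itself, writing $A+B=(\omega I_3+\Sigma)+O$ with $O^2=0_3$, then identifies $O=F+G+FG$ and observes that $O^2=0_3$ forces $FG=0_3$; you instead prove the purely algebraic fact that two commuting square-zero matrices in $\mathcal{M}_3(\mathbb{C})$ must annihilate each other, via the rank-one factorizations $F=uv^T$, $G=wx^T$ and a comparison of column and row spaces. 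Your route is more elementary and self-contained --- it does not require re-running the congruence-free/minimal-polynomial analysis on $A+B$ --- and it isolates the fact that the vanishing of $FG$ is forced by the dimension alone. For (\ref{JC3}), the paper again leans on the decomposition of $A+B$ inherited from the proof of Proposition \ref{couq}, stating rather tersely that $e^\omega=e^{\sigma+\tau}$ and $O=F+G+FG$; you obtain the same conclusion from the uniqueness of the multiplicative Jordan decomposition of $e^{A+B}$ together with the injectivity of $\exp$ on nilpotent matrices, which makes that matching step explicit and rigorous. The derivations of $e^{\Delta+\Theta}=I_3$, of $[F,\Theta]=[\Delta,G]$ from $[\Delta+\Theta,F+G]=0_3$, and of the converse implication coincide with the paper's.
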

  \begin{proof}
 We use the notations and results of Proposition \ref{couq}. Note that $\sigma{I}_3+\Delta$ is diagonalizable, $F$ is nilpotent and these matrices are polynomials in $A$. Thus (\ref{JC1}) and (\ref{JC2}) are the Jordan-Chevalley decompositions of $A,B$. Moreover 
 \begin{eqnarray*} e^A&=&e^{\sigma}(I_3+F),\\
  e^B&=&e^{\tau}(I_3+G),\\  
 \text{ and } e^{A+B}&=&e^{\sigma+\tau}(I_3+F+G+FG) \end{eqnarray*}
   with $FG=GF$. Thus $F+G+FG$ is nilpotent. According to the proof of Proposition \ref{couq}, $A+B=(\omega{I}_3+\Sigma)+O$ with $O\Sigma=\Sigma{O},e^{\Sigma}=I_3,O^2=0_3$. One has $e^{A+B}=e^{\omega}(I_3+O)$ and then $e^{\omega}=e^{\sigma+\tau},O=F+G+FG$. Finally $O^2=0_3$ implies that $FG=0_3$ and (\ref{JC3}) is the Jordan-Chevalley decomposition of $A+B$.
    Since $\Delta+\Theta$ and $F+G$ commute, one has $[F,\Theta]=[\Delta,G]$. Obviously $e^{\Delta+\Theta}=I_3$.\\
The last assertion is clear. 
   \end{proof}
  Our main result, in dimension two, is as follows
  \begin{thm} \label{dim2} 
  Let $(A,B)$ be a pair of $2\times{2}$ complex matrices. Then $(A,B)$ satisfies Condition (\ref{expbis}) if and only if $e^{A+B}=e^Ae^B=e^Be^A$ and $(A,B)$ has property L.       
  \end{thm}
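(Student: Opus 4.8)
The plan is to use the equivalence, valid for $n=2$, between property L and simultaneous triangularizability (the Remark above), so that the theorem reads: $(A,B)$ satisfies (\ref{expbis}) if and only if $A,B$ are $ST$ and $e^{A+B}=e^Ae^B=e^Be^A$. The one tool driving every step is a rigidity statement for exponential polynomials: if $\sum_{k}P_k(t)z_k^t=0$ for all sufficiently large positive integers $t$, where the $z_k$ are distinct nonzero complex numbers and the $P_k$ are polynomials, then every $P_k$ is the zero polynomial. After conjugating $A$ to Jordan form, the entries of $\exp(tA)$, of $\exp(tA)\exp(B)$ and (once $ST$ is known) of $\exp(tA+B)$ are exponential polynomials in $t$ with frequencies among the eigenvalues of $A$, so this lemma converts the validity of (\ref{expbis}) on the infinite set $\mathbb{N}\setminus U$ into genuine polynomial identities.

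For the converse direction (property L and the $t=1$ relation imply (\ref{expbis})), conjugate so that $A=\left(\begin{smallmatrix}\lambda_1&a\\0&\lambda_2\end{smallmatrix}\right)$ and $B=\left(\begin{smallmatrix}\mu_1&b\\0&\mu_2\end{smallmatrix}\right)$ are upper triangular. All three matrices $\exp(tA+B)$, $\exp(tA)\exp(B)$, $\exp(B)\exp(tA)$ are then upper triangular with coinciding diagonals, so only the $(1,2)$ entries must be compared. Using the divided difference $g(p,r)=\frac{e^p-e^r}{p-r}$ for the exponential of a triangular block, the required equality becomes, after clearing the factor $(t\lambda_1+\mu_1)-(t\lambda_2+\mu_2)$, a single identity between exponential polynomials in $t$; the hypotheses at $t=1$ together with the property-L ordering of the eigenvalues determine its coefficients. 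The finitely many integers for which the cleared factor vanishes (the eigenvalues of $tA+B$ collide) or for which a resonance $t\lambda_i+\mu_i\equiv t\lambda_j+\mu_j \pmod{2i\pi}$ occurs are exactly the elements one places in $U$.

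For the forward direction the essential point is to deduce that $A,B$ are $ST$. Conjugate $A$ to Jordan form and argue by cases on its eigenvalue structure. When $A$ is diagonalizable with $\lambda_1\ne\lambda_2$ and $\lambda_1-\lambda_2\notin 2i\pi\mathbb{Z}$, the $(1,2)$ and $(2,1)$ entries of $[\exp(tA),\exp(B)]$ are $(e^B)_{12}\big((e^{\lambda_1})^t-(e^{\lambda_2})^t\big)$ and its transpose; since $e^{\lambda_1}\ne e^{\lambda_2}$, the rigidity lemma forces $e^B$ to be diagonal, whence $A$ and $e^B$ commute, and the sum condition then pins $B$ into triangular form, giving $ST$. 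The delicate cases are the resonant ones, namely $A$ nilpotent and $\lambda_1-\lambda_2\in 2i\pi\mathbb{Z}^{*}$, where $\exp(tA)$ is scalar or unipotent at integers and the commutator carries no information. Here one uses $\exp(tA+B)=\exp(tA)\exp(B)$ directly: its right-hand side is an exponential polynomial, while the eigenvalues of $tA+B$ are $\tfrac12\big(\tau(t)\pm\sqrt{D(t)}\big)$ with $\tau$ linear and $D$ the discriminant, a quadratic in $t$. If property L fails then $D$ is not a perfect square, so $\sqrt{D(t)}$ has a non-terminating asymptotic expansion in $1/t$; comparing the two sides along $t\to\infty$ through integers, after separating frequencies by their real parts, produces a correction term that an exponential polynomial cannot match, a contradiction. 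Thus $D$ is a perfect square and $(A,B)$ has property L, hence is $ST$.

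Once $ST$ is established, recovering the $t=1$ relation is again an application of rigidity, but with a twist that is the true source of the exceptional set. The $(1,2)$ entry of $\exp(tA+B)-\exp(tA)\exp(B)$, after clearing its linear denominator, is an exponential polynomial $\phi(t)$ vanishing on $\mathbb{N}\setminus U$; grouping its frequencies by their class modulo $2i\pi$ and applying the lemma to the resulting distinct bases $z_c=e^{\nu_c}$ shows that the grouped coefficient polynomials vanish identically, so that $\phi$ vanishes at \emph{every} positive integer, in particular at $t=1$ (a one-line separate check handles the degenerate case in which $t=1$ is the collision point of the eigenvalues of $A+B$). The same grouping applied to the commutator yields $e^Ae^B=e^Be^A$, and together these give $e^{A+B}=e^Ae^B=e^Be^A$. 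I expect the main obstacle to be precisely this forward property-L step: controlling the $2i\pi$-congruence resonances, which simultaneously obstruct any naive analytic continuation from $\mathbb{N}\setminus U$ and generate the finite exceptional set $U$, and which are tamed only by the frequency-grouping form of the rigidity lemma.
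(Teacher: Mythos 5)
Your plan is essentially sound, but it takes a genuinely different and much longer route than the paper. The paper's converse invokes Schmoeger's theorem \cite{8}: if $AB\neq BA$ and $e^Ae^B=e^Be^A$, the spectra of $A$ and $B$ are not $2i\pi$ congruence-free, which in dimension $2$ forces both matrices to be diagonalizable with eigenvalue gaps in $2i\pi\mathbb{Z}^*$; after adding homotheties and using property L $\Leftrightarrow$ ST, everything reduces to $A=\mathrm{diag}(2i\pi\lambda,0)$, $B=\left(\begin{smallmatrix}2i\pi\mu&1\\0&0\end{smallmatrix}\right)$, where $e^{tA}=e^B=I_2$ and the only possible failure is at $t=-\mu/\lambda$. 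Your divided-difference computation in a common triangular basis reaches the same endpoint without \cite{8}, at the cost of a case analysis (one must check that the commutator identity at $t=1$ forces $e^{\lambda_1}=e^{\lambda_2}$ \emph{and} $e^{\mu_1}=e^{\mu_2}$ when $AB\ne BA$; the mixed cases are excluded by the $e^{A+B}=e^Ae^B$ identity, which your phrase ``the hypotheses at $t=1$ determine its coefficients'' glosses over but which does close up). For the forward direction the paper simply applies Proposition \ref{seguins} to the pair $(t_0A,B)$, whereas you propose to reprove it in dimension $2$ via the asymptotic expansion of $\sqrt{D(t)}$; this is plausible (and close in spirit to de Seguins Pazzis's own argument) but it is the hardest step of your whole program and is only sketched. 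Note also two soft spots: (i) the discriminant argument is needed even in your ``non-resonant'' case, since knowing $e^B$ is diagonal does not make $B$ triangular with respect to the eigenbasis of $A$; and (ii) your frequency-grouping recovery of the $t=1$ identity is unnecessary, because Condition (\ref{expbis}) requires $U\subset\mathbb{N}_{\geq2}$, so $t=1$ is never in the exceptional set and $e^{A+B}=e^Ae^B=e^Be^A$ is part of the hypothesis by definition. What your approach buys is self-containedness (no appeal to \cite{8} or \cite{3}) and, via the rigidity lemma, the stronger observation that validity for all large $t$ automatically implies validity at $t=1$; what it costs is considerable length and a key analytic step left unverified.
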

  \begin{proof} $(\Rightarrow)$ There exists $t_0\in\mathbb{N}$ such that Condition (\ref{expbis}) holds for every $t\geq{t}_0$. According to Proposition \ref{seguins}, the pair $(t_0A,B)$ has property L and $(A,B)$ too.\\
  $(\Leftarrow)$ Suppose $AB\not=BA$. According to \cite{8}, $s(A)$ and $s(B)$ are not $2i\pi$ CF and, since $n=2$, $A,B$ are diagonalizable. An homothety can be added to $A$ or $B$ and we may assume $A=\begin{pmatrix}2i\pi\lambda&0\\0&0\end{pmatrix}$, $s(B)=\{2i\pi\mu,0\}$, where $\lambda,\mu\in\mathbb{Z}^*$. Again since $n=2$, $A$ and $B$ are $ST$, that is, they have a common eigenvector. Thus we may assume $B=\begin{pmatrix}2i\pi\mu&1\\0&0\end{pmatrix}$ (eventually replacing $\lambda$ with $-\lambda$ or $\mu$ with $-\mu$).  
    Note that $e^Ae^B=e^{A+B}$ if and only if $\lambda+\mu\not=0$. If $t\in\mathbb{N}$, we obtain 
   $$e^{tA}e^B=e^Be^{tA}=e^{tA+B},$$
    but eventually if $t=-\mu/\lambda$.  
  \end{proof}
  \begin{rem} The pair $A=i\pi\begin{pmatrix}1&0\\0&-1\end{pmatrix},B=\pi\begin{pmatrix}-11i&6\\16&11i\end{pmatrix}$ satisfies the condition $e^{A+B}=e^Ae^B=e^Be^A$ but has not property L. 
  \end{rem}
  We prove our main result in dimension $3$.
     \begin{thm} \label{conj} Let $(A,B)$ be a pair of $3\times{3}$ complex matrices. Then $(A,B)$ satisfies Condition (\ref{expbis}) if and only if $e^{A+B}=e^Ae^B=e^Be^A$ and $(A,B)$ has property L.    
    \end{thm}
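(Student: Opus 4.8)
The plan is to imitate the proof of Theorem~\ref{dim2} and to reduce the essentially three-dimensional difficulties to Propositions~\ref{couq} and~\ref{decomp}. For the implication $(\Rightarrow)$ I would argue exactly as in dimension two: since $U$ is finite and $U\subset\mathbb{N}_{\geq 2}$, the value $t=1$ lies outside $U$, so Condition~(\ref{expbis}) at $t=1$ already gives $e^{A+B}=e^Ae^B=e^Be^A$; and choosing $t_0>\max U$ makes every multiple $st_0$ avoid $U$, so $(t_0A,B)$ satisfies Condition~(\ref{exp}), whence Proposition~\ref{seguins} yields property L for $(t_0A,B)$ and therefore for $(A,B)$, as scaling $A$ by the nonzero constant $t_0$ only rescales the $\lambda_i$.

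For $(\Leftarrow)$ I would first dispose of two easy situations. If $AB=BA$ then $tA$ and $B$ commute for every $t$ and Condition~(\ref{exp}), hence~(\ref{expbis}), holds with $U=\emptyset$. If $AB\neq BA$ but $\mathbb{C}^3$ is a decomposable $\langle A,B\rangle$-module, it splits as a line plus a plane; on the line $A,B$ act by scalars and the exponential identities are automatic, while on the plane the hypotheses restrict and Theorem~\ref{dim2} provides Condition~(\ref{expbis}). Reassembling the blocks gives Condition~(\ref{expbis}) for $(A,B)$. The only delicate point is that property L descends to the plane: for generic $(x,y)$ the three values $x\lambda_i+y\mu_i$ are distinct and the line contributes exactly one of them, so a density argument pins that index down once and for all and leaves property L on the plane.

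The core is the indecomposable case with $AB\neq BA$, where Proposition~\ref{decomp} gives $A=\sigma I_3+\Delta+F$ and $B=\tau I_3+\Theta+G$ with $F^2=G^2=FG=GF=0_3$, $e^{\Delta}=e^{\Theta}=e^{\Delta+\Theta}=I_3$, $[F,\Theta]=[\Delta,G]$, where $\Delta,\Theta,\Delta+\Theta$ are diagonalizable with spectra in $2i\pi\mathbb{Z}$ and $\Delta,F$ (resp. $\Theta,G$) are polynomials in $A$ (resp. $B$). From $tA+B=(t\sigma+\tau)I_3+(t\Delta+\Theta)+(tF+G)$ one checks that $(tF+G)^2=0_3$ and, using $[F,\Theta]=[\Delta,G]$, that $tF+G$ commutes with $t\Delta+\Theta$. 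Since $e^{t\Delta}=I_3$ (because $t\Delta$ is diagonalizable with spectrum in $2i\pi\mathbb{Z}$), $e^{\Theta}=I_3$ and $FG=GF=0_3$, a direct computation collapses $e^{tA}e^B$, $e^Be^{tA}$ and $e^{tA+B}$ to the common value $e^{t\sigma+\tau}(I_3+tF+G)$ as soon as $e^{t\Delta+\Theta}=I_3$. Thus the whole problem reduces to establishing $e^{t\Delta+\Theta}=I_3$ for all but finitely many $t$.

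To obtain this I would invoke property L, which passes from $(A,B)$ to $(\Delta,\Theta)$ because the commuting nilpotent part $xF+yG$ does not alter spectra; hence the eigenvalues of $t\Delta+\Theta$ are $t\delta_i+\eta_i$ with $\delta_i\in s(\Delta)$, $\eta_i\in s(\Theta)$, all in $2i\pi\mathbb{Z}$, and it suffices to prove that $t\Delta+\Theta$ is diagonalizable for almost all $t$. Unless two indices satisfy simultaneously $\delta_i=\delta_j$ and $\eta_i=\eta_j$, the numbers $t\delta_i+\eta_i$ are pairwise distinct for all but finitely many $t$, forcing diagonalizability. The remaining, and I expect hardest, case is a jointly repeated eigenvalue, say $\delta_1=\delta_2$ and $\eta_1=\eta_2$: subtracting the common value reduces matters to rank-one diagonalizable matrices $\Delta'=\Delta-\delta_1 I_3$ and $\Theta'=\Theta-\eta_1 I_3$ whose pencil $t\Delta'+\Theta'$ keeps $0$ as a double eigenvalue, so diagonalizability is equivalent to $\mathrm{rank}(t\Delta'+\Theta')\leq 1$. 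Writing $\Delta'=u_\Delta\psi_\Delta^{T}$ and $\Theta'=u_\Theta\psi_\Theta^{T}$, the rank equals $2$ exactly when the images $u_\Delta,u_\Theta$ and the functionals $\psi_\Delta,\psi_\Theta$ are both in general position; but that configuration at $t=1$ would make $\Delta'+\Theta'$ nondiagonalizable, contradicting the diagonalizability of $\Delta+\Theta$ furnished by Proposition~\ref{decomp}. Hence the images or the functionals coincide, the pencil has rank at most one for every $t$, and $t\Delta+\Theta$ is diagonalizable with spectrum in $2i\pi\mathbb{Z}$ for all but finitely many $t$, giving $e^{t\Delta+\Theta}=I_3$ and completing the proof.
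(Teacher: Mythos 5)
Your proposal is correct, and its overall skeleton coincides with the paper's: the forward implication via Proposition \ref{seguins} applied to $(t_0A,B)$; the backward implication by splitting into the commuting case, the decomposable case (reduced to Theorem \ref{dim2}, where your covering argument showing that some index $i$ must satisfy $(\lambda_i,\mu_i)=(a,b)$ is a detail the paper leaves implicit), and the indecomposable case, where Proposition \ref{decomp} reduces everything to $e^{t\Delta+\Theta}=I_3$, property L is transferred to $(\Delta,\Theta)$, and one must show $t\Delta+\Theta$ is diagonalizable for almost all $t$. The one genuine divergence is in the final sub-case of a jointly repeated eigenvalue: the paper diagonalizes $\Delta=\mathrm{diag}(0,0,\lambda)$, invokes Motzkin--Taussky's structure theorem to write $\Theta$ explicitly with a nilpotent $2\times 2$ block $W$, and does casework on $W$; you instead write the traceless parts as rank-one outer products $\Delta'=u_\Delta\psi_\Delta^{T}$, $\Theta'=u_\Theta\psi_\Theta^{T}$ and observe that $\mathrm{rank}(t\Delta'+\Theta')=2$ for some $t\neq 0$ forces the same at $t=1$, contradicting the diagonalizability of $\Delta+\Theta$ supplied by Proposition \ref{decomp}. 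Your variant is arguably cleaner, avoids the explicit normal form from \cite{4}, and makes transparent why only finitely many $t$ (where the third eigenvalue $t(\delta_3-\delta_1)+(\eta_3-\eta_1)$ vanishes) need to be excluded; the paper's casework, in exchange, exhibits the exceptional value $t=-\mu/\lambda$ explicitly. Both arguments rest on the same two ingredients, namely property L of $(\Delta,\Theta)$ and the diagonalizability of $\Theta$ and $\Delta+\Theta$.
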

    \begin{proof} $(\Rightarrow)$ Use the same argument than in the proof of the necessary condition of Theorem \ref{dim2}.\\
 $(\Leftarrow)$ Assume that the pair $(A,B)$ has property L, $AB\not=BA$ and 
 $$e^{A+B}=e^Ae^B=e^Be^A.$$
  $\bullet$ If $\mathbb{C}^3$ is a decomposable $<A,B>$ module, we conclude using Theorem \ref{dim2}.\\
 $\bullet$  Now $\mathbb{C}^3$ is an indecomposable $<A,B>$ module.\\
 $i)$ The pair $(A,B)$ has property $(*)$. Using notations of Proposition \ref{decomp}, we obtain for every $t\in\mathbb{N}$, 
 \begin{eqnarray*} e^{tA}&=&e^{t\sigma}(I_3+tF),\\
 e^{tA}e^B&=&e^Be^{tA}=e^{t\sigma+\tau}(I_3+tF+G),\\
 e^{tA+B}&=&e^{t\sigma+\tau}e^{t\Delta+\Theta}(I_3+tF+G). \end{eqnarray*} 
 Thus $e^{tA+B}=e^{tA}e^B=e^Be^{tA}$ if and only if $e^{t\Delta+\Theta}=I_3$.\\
 $ii)$ The pair $(\Delta+F,\Theta+G)$ has property L. We consider the associated orderings $s(\Delta+F)=s(\Delta)=(\lambda_i)_{i\leq{3}}$ and $s(\Theta+G)=s(\Theta)=(\mu_i)_{i\leq{3}}$. If $t\in\mathbb{C}$, one has \begin{eqnarray*} s(t(\Delta+F)+\Theta+G)=s((t\Delta+\Theta)+(tF+G))=(t\lambda_i+\mu_i)_{i\leq{3}}. \end{eqnarray*} 
 Since $t\Delta+\Theta$ commute with the nilpotent matrix $tF+G$, $s(t\Delta+\Theta)=(t\lambda_i+\mu_i)_{i\leq{3}}$ and the pair $(\Delta,\Theta)$ has property L.\\
 $iii)$ Since $s(\Delta)\subset{2}i\pi\mathbb{Z},s(\Theta)\subset{2}i\pi\mathbb{Z}$, if $t\in\mathbb{N}$, then $s(t\Delta+\Theta)\subset{2}i\pi\mathbb{Z}$. Thus it remains to prove that, for almost all $t\in\mathbb{N}$, $t\Delta+\Theta$ is diagonalizable. If $\Delta$ and $\Theta$ commute, we are done.\\
  We assume that $\Delta$ and $\Theta$ do not commute. Suppose that, for an infinite number of values of $t\in\mathbb{N}$, $t\Delta+\Theta$ is not diagonalizable. Then, for theses values of $t$, $(t\lambda_i+\mu_i)_{i\leq{3}}$ contains at least two equal elements. Thus, for instance, for an infinite number of values of $t$,  $t\lambda_1+\mu_1=t\lambda_2+\mu_2$. This implies that $\lambda_1=\lambda_2$ and $\mu_1=\mu_2$ and we may assume that these eigenvalues are $0$. Therefore the associated orderings are $s(\Delta)=\{0,0,\lambda\}$ where $\lambda\in{2}i\pi\mathbb{Z}^*$ and $s(\Theta)=\{0,0,\mu\}$ where $\mu\in{2}i\pi\mathbb{Z}^*$. We may assume that $\Delta=\mathrm{diag}(0,0,\lambda)$. According to \cite[Theorem 1]{4}, $$\Theta=\begin{pmatrix}W&\begin{pmatrix}u\\v\end{pmatrix}\\\begin{pmatrix}p&q\end{pmatrix}&\mu\end{pmatrix}$$
   where $W$ is a nilpotent $2\times{2}$ matrix and $u,v,p,q$ are complex numbers. We know that $\Theta$ and $\Delta+\Theta$ are diagonalizable, that is, their rank is $1$ and $\lambda+\mu\not=0$. It remains to show that, except for a finite number of values of $t\in\mathbb{N}$, $\mathrm{rank}(tA+B)=1$ and $t\lambda+\mu\not=0$.\\
 $Case\;1$. $W=\begin{pmatrix}0&1\\0&0\end{pmatrix}$. Therefore $\mathrm{rank}(\Theta)=1$ implies $p=v=0,\mu=qu$. Then $\mathrm{rank}(\Delta+\Theta)=1$ implies $\lambda=0$, a contradiction.\\
 $Case\;2$. $W=0_3$. Therefore $\mathrm{rank}(\Theta)=\mathrm{rank}(\Delta+\Theta)=1$ implies that 
 $$pu=pv=qu=qv=0.$$ 
 The previous condition implies that $\mathrm{rank}(t\Delta+\Theta)=1$, but for $t=-\mu/\lambda$.       
    \end{proof}
     
     \textbf{Acknowledgments}\\
The author thanks D. Adam for many valuable discussions.

\bibliographystyle{plain}

\end{document}